\documentclass[10pt]{article}

\usepackage[english]{babel}
\usepackage{amsmath,amsthm}
\usepackage{amsfonts}
\usepackage{graphicx}

\newtheorem{theorem}{Theorem}[section]

\newtheorem{lemma}{Lemma}[section]

\newtheorem{example}{Example}[section]

%%%%%%%%%%%%%%%%%%%%%%%%%%%%%%%%%%%%%%%%%%%%%%%%%%%%%%%%%%%%
\setlength{\topmargin}{0pt} \setlength{\oddsidemargin}{-0.2cm}
\setlength{\evensidemargin}{-0.2cm} \setlength{\textwidth}{15.6cm}
\setlength{\textheight}{22.6cm}

\def\qed{\qquad {$ \Box $} }

\setlength{\topmargin}{0pt} \setlength{\oddsidemargin}{-0.2cm}
\setlength{\evensidemargin}{-0.2cm} \setlength{\textwidth}{15.6cm}
\setlength{\textheight}{22.6cm}

\def\qed{\qquad {$ \Box $} }
%%%%%%%%%%%%%%%%%%%%%%%%%%%%%%%%%%%%%%%%%%%%%%%%%%%%%%%%%%%%%%

%\newcommand{\SS}{{\cal S}}

\begin{document}
\title{\bf  Calculating Entanglement Eigenvalues for Non-Symmetric Quantum Pure States Based on the Jacobian Semidefinite Programming Relaxation Method\thanks{The first and the third authors' work is partially supported by the Research Programme of National University of Defense Technology (No. ZK16-03-45), and the second author's work is partially supported by the National Science Foundation of China (No. 11471242).}}
\author{{Mengshi Zhang$^a$, Xinzhen Zhang$^b$, and Guyan Ni$^a$\thanks{Corresponding author. \newline {\it $\mathrm{\ \ \ \ }$
E-mail address}: guyan-ni@163.com(Guyan Ni); msh$_{-}$zhang@163.com(Mengshi Zhang); xzzhang@tju.edu.cn (Xinzhen Zhang).} %
 }\\
 {\small\it a College of Science, National University of
Defense Technology, Changsha, Hunan 410073, China.}\\
%{\small\it Changsha, Hunan 410073, China.}\\
 {\small\it b\ Department of Mathematics, School of Science, Tianjin University, Tianjin, 300072, China.}\\
\\ \newline
\begin{tabular}{p{14.8cm}} \hline\\ {\small {\bf Abstract} \newline\quad
The geometric measure of entanglement is a widely used entanglement measure for quantum pure states. The key problem of computation of the geometric measure is to calculate the entanglement eigenvalue, which is equivalent to computing the largest unitary eigenvalue of a corresponding complex tensor. In this paper, we propose a Jacobian semidefinite programming relaxation method to calculate the largest unitary eigenvalue of a complex tensor. For this, we first introduce the Jacobian semidefinite programming  relaxation method for a polynomial optimization with equality constraint, and then convert the problem of computing the largest unitary eigenvalue to a real equality constrained polynomial optimization problem, which can be solved by the Jacobian semidefinite programming relaxation method. Numerical examples are presented to show the availability of this approach.
\medskip%
$~~$\newline{\it Keywords:} Jacobian semidefinite programming relaxation; entanglement eigenvalue; unitary eigenvalue; polynomial optimization; complex tensor} \\ \\
\hline
\end{tabular}
}
\date{}
\maketitle

\vskip 2mm
%\baselineskip 0.35in

%\noexpand%
%\twocolumn
%------- new section -----------------------------------------------------
\section{Introduction}
\label{intro}

Quantum entanglement, first introduced by Einstein and Schir\"{o}dinger \cite{Ein35,Sch35}, has drawn much attention in the last decades. There are different measures of entanglement \cite{Ben96,Ve97,Har03} to quantify the minimum distance between a general state and the set of separable states, and the geometric measure is one of the most widely used measures for pure states, which was first proposed by Shimony \cite{S95} and generalized to multipartite system by Wei and Goldbart \cite{WG03}.

A key problem to computing the geometric measure of entanglement is to find the entanglement eigenvalue \cite{NQB14,Hill10}, which can be mathematically formulated as best rank-one approximation problem to a higher order tensor or a tensor eigenvalue computation problem \cite{Hay09,hqz12}. The Z-eigenvalue of a real tensor was first introduced by Qi \cite{qi05}. If the corresponding tensor of a quantum pure state is a real symmetric and non-negative tensor, it was shown that the entanglement eigenvalue is equal to the largest Z-eigenvalue \cite{hqz12}. However, Ni et al. \cite{NQB14} found that not all largest Z-eigenvalues of real tensors are the entanglement eigenvalue of pure states. To study geometric measure of entanglement by complex tensor analysis, Ni et al. \cite{NQB14} introduced the concept of unitary eigenvalue (U-eigenvalue) of complex tensors and showed that, for a pure state, its entanglement eigenvalue is equal to the largest U-eigenvalue and the nearest separable state is the corresponding unitary eigenvector (U-eigenvector).

It is shown that the problem of computing eigenvalues or the best rank-one approximation of a high order tensor is  NP-hard \cite{Hill13}. The existing methods mainly focus on real symmetric tensors \cite{QWW09,kdm11,HCD15,YYXSZ16} or complex symmetric tensors \cite{NB16}. Che et al. \cite{CCW17} proposed a neural networks method for computing  the best rank-one approximation of tensors. The Jacobian semidefinite relaxation method can also be used to compute the eigenpairs or the best rank-one problem of tensors. The method by Lasserre \cite{Las2001} was used to get the largest or smallest eigenvalue. Recently, Nie \cite{Nie2015} proposed a method for computing the hierarchy of local minimums in polynomial optimization, which uses the Jacobian semidefinite programming (SDP) relaxation method from \cite{Nie13}. Following the method in \cite{Nie13}, Cui et al. \cite{CuiDN2014}  computed all real eigenvalues sequentially. Nie and Wang \cite{NieW2014} used this method to solve the best rank-one tensor approximation problem. Hua et al. \cite{HNZ16} also used the approach to compute the geometric measure of entanglement for symmetric pure states. Most of these methods are  concentrated on the computation of the eigenpairs of symmetric tensors. However, there are few studies devoted to the non-symmetric case.

Motivated by above research and the relationship between quantum states and complex tensors, we discuss the computing of the entanglement eigenvalues of  non-symmetric pure states by the Jacobian SDP relaxation method. The remainder of this paper is organized as follows. In next section, we show some preliminaries about complex tensors, geometric measure of quantum entanglement, and their relationships. In Section 3, we first introduce the Jacobian SDP relaxation technique for equality constraint; Then, we use the method to compute the largest U-eigenvalues of non-symmetric complex tensors and their corresponding eigenvectors. In Section 4, numerical examples are carried out for different kinds of pure states.

%------- new section -----------------------------------------------------
\section{Preliminaries}
\subsection{Complex Tensors and U-Eigenvalues}

An $m$th-order complex tensor denoted by $\mathcal{A}=(\mathcal{A}_{i_1...i_m})\in H=\mathbb{C}^{n_1\times\cdots\times n_m}$ is a multiway array consisting of numbers $\mathcal{A}_{i_1...i_m}\in\mathbb{C}$ for all $i_k=1, 2, \cdots, n_k$, and $k=1, 2, \cdots, m$. A tensor $\mathcal{S}=(\mathcal{S}_{i_1...i_m})\in \mathbb{C}^{n\times\cdot\cdot\cdot\times n}$ is called symmetric, if its entries $\mathcal{S}_{i_1...i_m}$ are invariant under any permutation of $[i_1,...,i_m]$. For $\mathcal{A},\mathcal{B}\in H$, the inner product and norm are defined as
  $$
  \langle \mathcal{A},\mathcal{B}\rangle%\equiv \mathcal{A}^*\mathcal{B}
  :=\sum^{n_1,...,n_m}_{i_1,...,i_m=1}\mathcal{A}^*_{i_1...i_m}\mathcal{B}_{i_1...i_m},\
  ||\mathcal{A}||_F:=\sqrt{\langle \mathcal{A},\mathcal{A}\rangle},
  $$
where $\mathcal{A}^*_{i_1...i_m}$ denotes the complex conjugate of $\mathcal{A}_{i_1...i_m}$.

Given $m$ vectors $z^{(i)}\in \mathbb{C}^{n_i},\ i=1,...,m $, a rank-one complex tensor $\otimes_{i=1}^m z^{(i)} $ is defined as
$$(\otimes_{i=1}^m z^{(i)})_{i_1...i_m}:=z^{(1)}_{i_1}\cdot \cdot \cdot z^{(m)}_{i_m}.$$

Define the inner product of a tensor and a rank-one tensor as follows:
$$
 \langle \mathcal{A},\otimes_{i=1}^m z^{(i)}\rangle%\equiv \mathcal{A}^{*}z^{(1)}\cdots z^{(m)}
 :=\sum_{i_1,\cdots,i_m=1}^{n_1,\cdots,n_m} \mathcal{A}_{i_1\cdots i_m}^* z_{i_1}^{(1)}\cdots z_{i_m}^{(m)}.
$$

By the tensor product, $\langle \mathcal{A},\otimes_{i=1,i\not=k}^m z^{(i)}\rangle$ denotes a vector in $\mathbb{C}^{n_k}$, with $i_k$-th component being
$$
\langle \mathcal{A},\otimes_{i=1,i\not=k}^m z^{(i)}\rangle_{i_k} :=\sum_{i_1,\cdots,i_{k-1},i_{k+1},\cdots,i_m=1}^{n_1,\cdots,n_{k-1},n_{k+1},\cdots,n_m} \mathcal{A}_{i_1\cdots i_k\cdots  i_m}^* z_{i_1}^{(1)}\cdots z_{i_{k-1}}^{(k-1)}z_{i_{k+1}}^{(k+1)} \cdots z_{i_m}^{(m)}.
$$

A rank-one complex tensor $\otimes_{i=1}^m z^{(i)}$ is called {\it the best complex rank-one approximation} to $\mathcal{A}$, if it is a solution of the following optimization problem \cite{NQB14}:
 \begin{equation}\label{best appro1}
 \min_{z^{(i)}\in\mathbb{C}^{n_i},||z^{(i)}||=1}||\mathcal{A}-\otimes^m_{i=1} z^{(i)}||_F^2.
 \end{equation}

Assume that $n_1=n_2=\cdots=n_m=n$. Let $z=(z_1, \cdots, z_n)^T\in\mathbb{C}^n$. The rank-one tensor $\otimes_{i=1}^m z$ is called {\it a symmetric rank-one complex tensor}, abbreviated as $z^m$.

A symmetric rank-one complex tensor $\otimes_{i=1}^m z$ is called {\it the best symmetric complex rank-one approximation} to a symmetric tensor $\mathcal{S}$, if it is a solution of the following optimization problem \cite{NQB14}:
 \begin{equation}\label{best appro2}
  \min_{z\in\mathbb{C}^n,||z||=1}||\mathcal{S}-\otimes^m_{i=1} z||_F^2.
 \end{equation}

If $\mathcal{A}$ is symmetric, then optimization problem (\ref{best appro1}) is equivalent to (\ref{best appro2}) \cite{NQB14,HKWGG2009}. It means that the best symmetric complex rank-one approximation is also the best complex rank-one approximation for a symmetric complex tensor. In order to solve these optimization problems, Ni, Qi and Bai \cite{NQB14} defined U-eigenvalue and US-eigenvalue.

A real number $\lambda\in \mathbb{R}$ is called a {\it unitary eigenvalue (U-eigenvalue)} of $\mathcal{A}$, if $\lambda$ and a rank-one tensor $\otimes_{i=1}^m x^{(i)}$ is a solution pair of the following equation system:
\begin{equation}\label{EQ:Ueigen0}
\left\{
  \begin{array}{ll}
    \langle \mathcal{A},\otimes_{i=1,i\not=k}^m x^{(i)}\rangle=\lambda {x^{(k)}}^*, \\
    \lambda\in \mathbb{R}, x^{(i)}\in \mathbb{C}^{n_i}, ||x^{(i)}||=1,\ i=1,2,\cdots,m,
  \end{array}
\right.\ k=1,2,\cdots, m.
\end{equation}

For a symmetric tensor $\mathcal{S}$, a real number $\lambda\in \mathbb{R}$ is called a {\it unitary symmetric eigenvalue (US-eigenvalue)} of $\mathcal{S}$, if and vector $x$ solve the following equation system:
\begin{equation}\label{EQ:USeigen0}
\langle \mathcal{S},\otimes_{i=1}^{m-1} x\rangle=\lambda x^*,\ \lambda\in \mathbb{R},\ x\in \mathbb{C}^{n},\ ||x||=1.
\end{equation}

Furthermore, if $\mathcal{S}$ is real symmetric tensor, $\lambda$ is a real number and $x$ is a real vector, $\{\lambda, x \}$ solve the following equation system:
\begin{equation}\label{EQ:Zeigen0}
\langle \mathcal{S},\otimes_{i=1}^{m-1} x\rangle=\lambda x,\ \lambda\in \mathbb{R},\ x\in \mathbb{R}^{n},\ x^\top x=1,
\end{equation}
then $\lambda$ is called a {\it Z-eigenvalue} of $\mathcal{S}$.

Note that
\begin{equation}\label{EQ:distentsqure}
  ||\mathcal{A}-\otimes^m_{i=1} z^{(i)}||_F^2 = ||\mathcal{A}||_F^2+||\otimes^m_{i=1} z^{(i)}||_F^2-\langle \mathcal{A},\otimes^m_{i=1} z^{(i)}\rangle-\langle \otimes^m_{i=1} z^{(i)},\mathcal{A}\rangle .
\end{equation}
Hence, the minimization problem (\ref{best appro1}) is equivalent to the maximization problem:
\begin{equation}\label{max appro1}
   \max ~~\langle\mathcal{A},\otimes_{i=1}^m z^{i}\rangle+\langle\otimes_{i=1}^m z^{i},\mathcal{A}\rangle,\ s.t.~~||z^{(i)}||=1, ~~z^{(i)}\in\mathbb{C}^{n_i}.
\end{equation}
The critical point of the equivalent optimization problem (\ref{max appro1}) is given by
\begin{equation}\label{EQ:Ueigen1}
\left\{
  \begin{array}{ll}
    \langle \mathcal{A},\otimes_{i=1,i\not=k}^m z^{(i)}\rangle=\lambda {z^{(k)}}^*, \\
    \langle \otimes_{i=1,i\not=k}^m z^{(i)},\mathcal{A}\rangle=\lambda {z^{(k)}}, \\
    \lambda\in \mathbb{C}, ||z^{(i)}||=1,\ i=1,2,\cdots,m,
  \end{array}
\right.\ k=1,2,\cdots, m.
\end{equation}

Since $\langle \mathcal{A},\otimes_{i=1,i\not=k}^m z^{(i)}\rangle=\langle \otimes_{i=1,i\not=k}^m z^{(i)},\mathcal{A}\rangle^* $, (\ref{EQ:Ueigen0}) and (\ref{EQ:Ueigen1}) are the same. Following the fact that the largest absolute value of U-eigenvalue of the tensor $\mathcal{A}$ is the solution of the problem (\ref{max appro1}), the corresponding rank-one tensor $ \otimes_{i=1}^m z^{(i)}$ is the best rank-one approximation of $\mathcal{A}$.

\subsection{Multipartite Pure States and their Geometric Measure of Entanglement}
An $m$-partite pure state $|\psi\rangle$ of a composite quantum system can be regarded as a normalized element in a Hilbert space $H =\otimes_{k=1}^{m} H_k$, where
$H_k=\mathbb{C}^{n_k}$, $k = 1,2,$ $ \cdots ,m.$ Assume that $\{|e^{(k)}_{i_k}\rangle: i_k=1,2,\cdots, n_k\}$ is an orthonormal basis of $H_k$. Then,  $\{|e^{(1)}_{i_1} e^{(2)}_{i_2} \cdots e^{(m)}_{i_m}\rangle: i_k=1,2,\cdots, n_k;\ k=1,2,\cdots, m\}$ is an orthonormal basis of $H$. $|\psi\rangle$ is defined by
\begin{equation}\label{psi}
|\psi\rangle :=\sum_{i_1,\cdots,i_m=1}^{n_1,\cdots,n_m} x_{i_1\cdots i_m} |e^{(1)}_{i_1} e^{(2)}_{i_2} \cdots e^{(m)}_{i_m}\rangle,
\end{equation}
where $x_{i_1...i_m}\in\mathbb{C}$. $|\psi\rangle$ is called symmetric, if these amplitudes are invariant under permutations of the parties. A separable $m$-partite pure state is denoted as
$$|\phi\rangle :=\otimes_{k=1}^m |\phi^{(k)}\rangle, $$
where the index $k=1,\cdots, m$ labels the parts, and
$$
|\phi^{(k)}\rangle:=\sum_{i_k=1}^{n_k} x_{i_k}^{(k)} |e_{i_k}^{(k)}\rangle.
$$

Denote by $Separ(H)$ the set of all separable pure states $|\phi\rangle$ in $H$, subject to the constraint $\langle\phi|\phi\rangle=1$. The geometric measure of a given $m$-partite pure state $|\psi\rangle$ is defined as\cite{WG03}
\begin{equation}\label{geometric measure1}
E_G(|\psi\rangle):=\min_{|\phi\rangle\in Separ(H)} || |\psi\rangle-|\phi\rangle ||_F.
\end{equation}
Minimization problem (\ref{geometric measure1}) is equivalent to the following maximization problem:
\begin{equation}\label{entanglement eigenvalue}
G(|\psi\rangle):=\max_{|\phi\rangle\in Separ(H)} |\langle\psi|\phi\rangle|.
\end{equation}
The maximum of $G(|\psi\rangle)$ is called the \emph{entanglement eigenvalue}.

\subsection{The Relation of Multipartite Pure States and Complex Tensors }

For an $m$-partite pure state state $|\psi\rangle$ defined as in (\ref{psi}), the mutliway array consisting of $x_{i_1...i_m}$ can be denoted by a complex tensor $\mathcal{X}$. We call the tensor $\mathcal{X}$ as a corresponding tensor of $|\psi\rangle$ under an orthonormal basis of $H$. Hence, if an orthonormal basis of $H$ is given, then there is a 1-1 map between $m$-partite pure state states and $m$th-order complex tensors.

\begin{theorem}\label{TH:pmlambda}
Assume that $\mathcal{A}$ is an $m$th-order complex tensor. If $\lambda$ is a U-eigenvalue of $\mathcal{A}$, then $-\lambda$ is also a U-eigenvalue.
\end{theorem}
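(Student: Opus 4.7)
The plan is to exploit the obvious phase/sign ambiguity in the U-eigenvalue equation. Given a U-eigenpair $(\lambda,\otimes_{i=1}^m x^{(i)})$, the idea is to produce a new rank-one tensor whose components have had a global sign flip absorbed into exactly one factor, so that the right-hand side of the eigen-equation acquires a factor of $-1$ while the left-hand side is unchanged in magnitude and the normalization is preserved.

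Concretely, I would set $y^{(1)}:=-x^{(1)}$ and $y^{(k)}:=x^{(k)}$ for $k=2,\dots,m$. Each $y^{(k)}$ is still a unit vector in $\mathbb{C}^{n_k}$ since $|-1|=1$, and $-\lambda\in\mathbb{R}$ since $\lambda\in\mathbb{R}$, so the format requirements in \eqref{EQ:Ueigen0} are satisfied. It remains to verify the $m$ eigen-equations.

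For $k=1$, the contracted vector $\langle\mathcal{A},\otimes_{i=2}^m y^{(i)}\rangle$ does not involve the first factor, hence equals $\langle\mathcal{A},\otimes_{i=2}^m x^{(i)}\rangle=\lambda{x^{(1)}}^{*}$; since $y^{(1)}=-x^{(1)}$ we have ${x^{(1)}}^{*}=-{y^{(1)}}^{*}$, and therefore the expression equals $-\lambda{y^{(1)}}^{*}$, as required. For $k\ge 2$, the contraction $\langle\mathcal{A},\otimes_{i\neq k}y^{(i)}\rangle$ multilinearly picks up exactly one factor of $-1$ from the $y^{(1)}=-x^{(1)}$ slot, giving $-\langle\mathcal{A},\otimes_{i\neq k}x^{(i)}\rangle=-\lambda{x^{(k)}}^{*}=-\lambda{y^{(k)}}^{*}$. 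Thus $(-\lambda,\otimes_{i=1}^m y^{(i)})$ satisfies \eqref{EQ:Ueigen0}, proving that $-\lambda$ is a U-eigenvalue.

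There is no real obstacle in this proof; the only thing one has to be careful about is the convention for the inner product, which conjugates the tensor (not the vectors), so the contraction is genuinely multilinear in the $z^{(i)}$ and a sign pulls out cleanly. A cosmetic alternative would be to distribute the sign as phases $e^{i\theta_k}$ with $\sum_k\theta_k\equiv\pi\pmod{2\pi}$, which shows more generally that multiplying the eigenvectors by unit complex numbers whose product is $-1$ sends $\lambda$ to $-\lambda$, but the single-sign choice above is the shortest way to make the point.
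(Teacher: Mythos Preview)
Your proof is correct. Both your argument and the paper's exploit the same phase ambiguity, but they distribute the sign differently: the paper multiplies \emph{every} factor by a common $m$th root of $-1$, setting $\eta=\sqrt[m]{-1}$ and checking that
\[
\langle \mathcal{A},\otimes_{i\neq k}(\eta z^{(i)})\rangle=\eta^{m-1}\lambda\,{z^{(k)}}^{*}=-\bar\eta\,\lambda\,{z^{(k)}}^{*}=-\lambda\,(\eta z^{(k)})^{*},
\]
whereas you concentrate the whole sign in a single factor by taking $y^{(1)}=-x^{(1)}$. Your version is slightly more elementary---it avoids any discussion of complex roots and the implicit use of $|\eta|=1$ that makes $\eta^{-1}=\bar\eta$---and your closing remark about phases $e^{i\theta_k}$ with $\sum_k\theta_k\equiv\pi$ actually contains the paper's choice ($\theta_k=\pi/m$ for all $k$) as a special case. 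The paper's symmetric distribution has the minor advantage that it would also directly yield a US-eigenvector in the symmetric setting, but for the statement at hand your argument is the shorter one.
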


\begin{proof} Assume that $\eta=\sqrt[m]{-1}$. If $\lambda$ is a U-eigenvalue of $\mathcal{A}$ associated with rank-one tensor $ \otimes_{i=1}^m z^{(i)}$, then
$$
    \langle \mathcal{A},\otimes_{i=1,i\not= k}^{m}(\eta z^{(i)})\rangle=-\lambda (\eta z^{(k)})^*,\ k=1, \cdots, m.
$$
It follows that $-\lambda$ is also a U-eigenvalue. This completes the proof. \qed
\end{proof}

\begin{theorem}\label{TH:Glambdamax} Assume that $\mathcal{X}$ is the corresponding tensor of a multipartite pure state $|\psi\rangle$ under a orthonormail basis as in {\rm (\ref{psi})}. Let $\lambda_{max}$ be the largest U-eigenvalue of $\mathcal{X}$. Then,

(a) $G(|\psi\rangle)=\lambda_{max}$,

(b) $ E_G(|\psi\rangle)=\sqrt{2-2\lambda_{max}}.$
\end{theorem}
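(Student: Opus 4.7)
The plan is to translate both halves into tensor language and then invoke the identification between the optimum of problem (\ref{max appro1}) and the largest U-eigenvalue that was already established in Section~2.1, together with the $\pm\lambda$ symmetry from Theorem~\ref{TH:pmlambda}.

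For part (a), I would first rewrite $G(|\psi\rangle)$ as a tensor optimization. Given a separable state $|\phi\rangle=\otimes_{k=1}^m|\phi^{(k)}\rangle$ with $\langle\phi|\phi\rangle=1$, the factor norms $\||\phi^{(k)}\rangle\|$ are positive reals with product $1$, so after rescaling each factor I can assume $\||\phi^{(k)}\rangle\|=1$ without altering $|\phi\rangle$. Letting $z^{(k)}\in\mathbb{C}^{n_k}$ denote the coefficient vector of $|\phi^{(k)}\rangle$ in the chosen orthonormal basis, a direct expansion of $\langle\psi|\phi\rangle$ using (\ref{psi}) gives $\langle\psi|\phi\rangle=\langle\mathcal{X},\otimes_{k=1}^m z^{(k)}\rangle$, so
\[
G(|\psi\rangle)=\max_{\|z^{(k)}\|=1,\,z^{(k)}\in\mathbb{C}^{n_k}}\bigl|\langle\mathcal{X},\otimes_{k=1}^m z^{(k)}\rangle\bigr|.
\]
Because each $z^{(k)}$ can be independently multiplied by a unit-modulus complex scalar without affecting its norm, I can rotate any one factor (say $z^{(1)}$) so that $\langle\mathcal{X},\otimes_{k=1}^m z^{(k)}\rangle$ is real and non-negative. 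Hence the above maximum equals $\max \operatorname{Re}\langle\mathcal{X},\otimes_{k=1}^m z^{(k)}\rangle$, which is exactly half the optimum of problem (\ref{max appro1}) applied to $\mathcal{A}=\mathcal{X}$.

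Next I would invoke the fact, recorded just after (\ref{EQ:Ueigen1}), that the optimum of (\ref{max appro1}) is twice the largest absolute value of a U-eigenvalue of $\mathcal{X}$. (Sanity-checked by contracting the first equation of (\ref{EQ:Ueigen0}) with $z^{(k)}$ to see $\langle\mathcal{X},\otimes z^{(i)}\rangle=\lambda$ at any U-eigenpair, so the critical values of (\ref{max appro1}) are exactly $2\lambda$.) By Theorem~\ref{TH:pmlambda} the set of U-eigenvalues is symmetric about $0$, so the largest U-eigenvalue $\lambda_{\max}$ is non-negative and agrees with the largest absolute value. Combining the previous display gives $G(|\psi\rangle)=\lambda_{\max}$, which is (a).

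For part (b), I would expand
\[
\bigl\||\psi\rangle-|\phi\rangle\bigr\|_F^2=\langle\psi|\psi\rangle+\langle\phi|\phi\rangle-\langle\psi|\phi\rangle-\langle\phi|\psi\rangle=2-2\operatorname{Re}\langle\psi|\phi\rangle
\]
using $\langle\psi|\psi\rangle=\langle\phi|\phi\rangle=1$. Minimizing the left side over $|\phi\rangle\in Separ(H)$ is thus equivalent to maximizing $\operatorname{Re}\langle\psi|\phi\rangle$. The same phase-rotation argument used in (a) shows this maximum equals $\max|\langle\psi|\phi\rangle|=G(|\psi\rangle)=\lambda_{\max}$, yielding $E_G(|\psi\rangle)^2=2-2\lambda_{\max}$ and therefore $E_G(|\psi\rangle)=\sqrt{2-2\lambda_{\max}}$. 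The only real subtlety anywhere is the phase-rotation step, which needs the freedom to absorb an overall phase into a single tensor factor while preserving unit norms and the product structure; this is the step I would state most carefully.
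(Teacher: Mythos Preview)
Your proposal is correct and follows essentially the same route as the paper: identify separable states with unit rank-one tensors, relate $G(|\psi\rangle)$ to the optimum of (\ref{max appro1}), invoke Theorem~\ref{TH:pmlambda} to equate the largest U-eigenvalue with the largest absolute U-eigenvalue, and for (b) use the expansion (\ref{EQ:distentsqure}). The paper's version is terser---it simply asserts $\lambda_{\max}=\langle\psi|\phi\rangle=\max_{|\varphi\rangle}|\langle\psi|\varphi\rangle|$ after constructing $|\phi\rangle$ from the U-eigenvector---whereas you spell out the factor-normalization and phase-rotation steps explicitly; but the underlying argument is the same.
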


\begin{proof} (a) Assume that $\lambda_{max}$ is the largest U-eigenvalue of $\mathcal{X}$ with a corresponding rank-one tensor $ \otimes_{i=1}^m z^{(i)}$. Let $|\phi\rangle =\otimes_{k=1}^m |\phi^{(k)}\rangle $ where $|\phi^{(k)}\rangle=\sum_{i=1}^{n_k} z_{i}^{(k)}|e_{i}^{(k)}\rangle$ for all $k=1,2,\cdots,m$. By Theorem \ref{TH:pmlambda}, it is known that if $\lambda$ is a U-eigenvalue of $\mathcal{A}$, then $-\lambda$ is also a U-eigenvalue. Hence, the maximal absolute value of U-eigenvalues is also a U-eigenvalue of $\mathcal{A}$. Then we have
$$
\lambda_{max}=\langle \psi | \phi \rangle =\max_{|\varphi\rangle\in Separ(H)}|\langle \psi | \varphi \rangle|.
$$
It follows that $G(|\psi\rangle)=\lambda_{max}$.

(b) The second result follows directly from (\ref{EQ:distentsqure}) and (\ref{geometric measure1}).  \qed
\end{proof}

\section{Jacobian SDP Relaxation Method for Calculating Geometric Measure of Entanglement for Pure States}
In this section, we introduce a Jacobian SDP relaxation method and a polynomial optimization method to compute geometric measure of entanglement for a non-symmetric pure state. The key point of computing geometric measure of entanglement is to compute the largest U-eigenvalue of non-symmetric complex tensors. There are many literatures focus on computing eigenvalues or the largest eigenvalue of a symmetric real tensor. To proceed, we first introduce a Jacobian SDP relaxation method for equality constraint, and then introduce a polynomial optimization method to compute the largest U-eigenvalue of a non-symmetric complex tensor via Jacobian SDP relaxation method.

\subsection{The Jacobian SDP Relaxation Method for Equality Constraint}
Consider a real-valued polynomial optimization problem
\begin{equation}\label{Eq:ployoptim}
  \left\{
  \begin{array}{ll}
   \min_{x\in \mathbb{R}^n} ~~f(x) \\
   s.t.~~h_i(x)=0,\ i=1,2,...,r_1, \\
   ~~~~~~g_j(x)\geq 0,\ j=1,2,...,r_2,
  \end{array}
\right.
\end{equation}
where $f(x), h_i(x), g_j(x)$ are polynomial functions in $ x\in \mathbb{R}^n$.
Let $f_{min}$ be its global minimum. The problem of finding $f_{min}$ is NP-hard \cite{Nie2015}. A standard approach for solving (\ref{Eq:ployoptim}) is SDP relaxations proposed by Lasserre \cite{Las2001}. It is based on a sequence of sum of squares type representations of polynomials that are non-negative on its feasible set.

A new SDP type relaxation for solving (\ref{Eq:ployoptim}) was proposed by Nie \cite{Nie13}, and the involved polynomials are only in $x$. Suppose the feasible set is non-singular and $f_{min}$ is achievable, which is true generically. Nie constructed a set of new polynomials, $\varphi_1(x),\ \cdots, \varphi_r(x)$, by using the minors of the Jacobian of $f, h_i, g_j$, such that (\ref{Eq:ployoptim}) is equivalent to
\begin{equation}\label{Eq:ployoptim2}
  \left\{
  \begin{array}{ll}
   \min_{x\in \mathbb{R}^n} ~~f(x) \\
   s.t.~~h_i(x)=\varphi_j(x)=0,\ i=1,\cdots,r_1, j=1,\cdots,r, \\
   ~~~~~~\prod_{k=1}^{r_2} g_k(x)^{v_k}\geq 0,\ v_k\in \{0,1\},\ k=1,\cdots, r_2.
  \end{array}
\right.
\end{equation}
Nie \cite{Nie13} proved that, for all $N$ big enough, the standard $N$-th order Lasserre¡¯s relaxation for the above returns a lower bound that is equal to the minimum $f_{min}$. That is, an exact SDP
relaxation for (\ref{Eq:ployoptim}) is obtained by the Jacobian SDP relaxation method. Recently, Hua et,al. \cite{HNZ16} convert the problem of computing the geometric measure of entanglement for symmetric pure states to a real polynomial optimization problem and solve it through the Jacobian SDP relaxation method.

Here, we briefly review the equality constrained Jacobian SDP relaxation problem. Let $u\in \mathbb{R}^{2n}$, $f(u)$ be a real homogeneous polynomial function on $u$ with degree $m$, and $g(u)$ be a real polynomial function. Consider the following optimization.
\begin{equation}\label{convert(14)}
%    \begin{array}{ll}
   \max ~~f(u) \quad s.t.~~g(u)=0.
%  \end{array}
\end{equation}

\begin{lemma}{\rm \cite{HNZ16}}
  The polynomial optimization problem {\rm (\ref{convert(14)})} is equivalent to
\begin{equation}\label{eq:polyopt2}
%  \begin{array}{ll}
   \max ~~f(u) \quad s.t.~~g(u)=0,h_r(u)=0,~1\leq r\leq 4n-3,
%  \end{array}
\end{equation}
where $u\in \mathbb{R}^{2n}$ and
\begin{equation}\label{Eq:Hr}
 h_r:=\sum_{i+j=r+2}(f'_{u_i}g'_{u_j}-f'_{u_j}g'_{u_i})=0,~1\leq r\leq 4n-3,
\end{equation}
$f'_{u_i}$ and $g'_{u_i}$ denote partial derivatives of $f$ and $g$ for $u_i$, respectively.
\end{lemma}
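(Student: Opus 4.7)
The plan is to establish equivalence by showing the two problems share the same optimal value, i.e., $\max(\ref{eq:polyopt2}) = \max(\ref{convert(14)})$. The key observation is that each auxiliary polynomial $h_r$ is a linear combination of the $2\times 2$ minors $f'_{u_i}g'_{u_j} - f'_{u_j}g'_{u_i}$ of the Jacobian matrix $J(u) = \begin{pmatrix} \nabla f(u)^T \\ \nabla g(u)^T \end{pmatrix}$—specifically, the antidiagonal sum along $i+j=r+2$. Hence every $h_r$ vanishes whenever all such minors vanish, and in particular whenever $\nabla f$ and $\nabla g$ are linearly dependent. This reduces the whole lemma to the classical Lagrange multiplier condition at a maximizer.

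The inequality $\max(\ref{eq:polyopt2}) \leq \max(\ref{convert(14)})$ is immediate since (\ref{eq:polyopt2}) merely appends extra equality constraints to the feasible set of (\ref{convert(14)}). For the reverse inequality, let $u^*$ attain the maximum of (\ref{convert(14)}); it suffices to verify $h_r(u^*) = 0$ for every $r \in \{1,\ldots,4n-3\}$, so that $u^*$ is feasible for (\ref{eq:polyopt2}) with the same objective value. I would split into two cases. If $\nabla g(u^*) = 0$, then $g'_{u_i}(u^*) = 0$ for every $i$, each minor $f'_{u_i}(u^*)g'_{u_j}(u^*) - f'_{u_j}(u^*)g'_{u_i}(u^*)$ is trivially zero, and hence $h_r(u^*) = 0$. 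Otherwise $\nabla g(u^*) \neq 0$, the linear independence constraint qualification holds, and the Lagrange multiplier theorem furnishes $\mu \in \mathbb{R}$ with $\nabla f(u^*) = \mu\,\nabla g(u^*)$; substituting $f'_{u_i}(u^*) = \mu\,g'_{u_i}(u^*)$ into a typical minor gives $\mu\bigl(g'_{u_i}g'_{u_j} - g'_{u_j}g'_{u_i}\bigr)(u^*) = 0$, so again every $h_r(u^*) = 0$.

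Combining the two directions yields the claimed equivalence, together with the bonus that the optimizer sets of (\ref{convert(14)}) and (\ref{eq:polyopt2}) coincide. I do not expect any substantive obstacle here: the only real subtlety is the degenerate Lagrange case $\nabla g(u^*)=0$, which is handled by the trivial observation above that the minors collapse. One implicit assumption is that the maximum is attained, which is harmless in the setting of interest, where normalization or compactness forces attainment. Note that homogeneity of $f$ plays no role in this equivalence; it is exploited elsewhere in the Jacobian SDP relaxation machinery but not in the passage from (\ref{convert(14)}) to (\ref{eq:polyopt2}).
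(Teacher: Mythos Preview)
Your argument is correct. The core idea---that at any maximizer of (\ref{convert(14)}) the gradients $\nabla f$ and $\nabla g$ are linearly dependent (either trivially when $\nabla g(u^*)=0$, or via the Lagrange multiplier rule when $\nabla g(u^*)\neq 0$), so every $2\times 2$ minor of the Jacobian vanishes and hence each antidiagonal sum $h_r$ vanishes---is exactly the standard mechanism behind the Jacobian SDP relaxation. Your remark about attainment of the maximum is also to the point; in the paper's applications $g$ is a sphere constraint, so compactness handles this.

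As for comparison with the paper: the paper does \emph{not} supply a proof of this lemma at all. It is quoted from \cite{HNZ16} and stated without argument, so there is no in-paper proof to compare against. Your write-up is precisely the argument one finds in the Jacobian SDP literature (cf.\ Nie \cite{Nie13}), specialized to a single equality constraint, and would serve as a self-contained justification the paper omits.
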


Let $q(u)$ be a polynomial with $deg(q)\leq2N$. Assume that $y$ is a moment vector indexed by $\alpha\in \mathbb{N}^{2n}$ with $|\alpha|\leq 2N$. The N-th order localizing matrix of $q$ is defined as
$$
L^{(N)}_q(y) :=\sum_{\alpha\in \mathbb{N}^{2n}:|\alpha|\leq 2N} A^{(N)}_{\alpha}y_{\alpha},
$$
where the symmetric matrices $A^{(N)}_{\alpha}$ satisfy:
$$
q(u)[u]_d[u]_d^T=\sum_{\alpha\in \mathbb{N}^{2n}:|\alpha|\leq 2N} A^{(N)}_{\alpha}u^{\alpha},
$$
$d=N-\lceil deg(q)/2 \rceil$ and $[x]_d$ is the monomial vector:
$$
[u]_d:=[1, u_1, u_2, \cdots, u_{2n}, u_1^2, u_1u_2, \cdots, u_{2n}^2, \cdots, u_1^d,\ u_1^{d-1}u_2,\ \cdots,\ u_{2n}^d]^T.
$$

When $q=1$, the $L^{(N)}_1(y)$ is denoted by $M_N(y)$.
Lasserre's SDP relaxations for solving (\ref{eq:polyopt2}) is
\begin{equation}\label{SDP}
  \left\{
    \begin{array}{ll}
      \rho_N:=& \max \limits_{\alpha\in \mathbb{N}^{2n}:|\alpha|=m}f_{\alpha}y_{\alpha} \\
      s.t & L_{g}^{(N)}(y)=0,~L_{h_r}^{(N)}(y)=0,\ (r=1,...,4n-3)\\
      ~~~ & y_0=1, M_N(y)\succeq 0.
    \end{array}
  \right.
\end{equation}

Denote $f_{max}$ as the maximum of (\ref{eq:polyopt2}), the sequences $\{\rho_N\}$ is monotonicallly decreasing and is upper bounds for $f_{max}$.

\begin{theorem}{\rm \cite{HNZ16}}
  When Lasserre's hierarchy of semidefinite relaxations is applied to solve {\rm (\ref{eq:polyopt2})}, for all $N$ big enough, the standard $N$-th order Lasserre's relaxation for {\rm (\ref{eq:polyopt2})} returns the maximum $f_{max}$.
\end{theorem}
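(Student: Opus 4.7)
The plan is to reduce the claim to Nie's exact Jacobian SDP relaxation theorem \cite{Nie13} by verifying that the augmented problem (\ref{eq:polyopt2}) fits exactly the framework to which that theorem applies. First, I would isolate the hypotheses required by \cite{Nie13}: the real variety cut out by the equality constraints should be nonsingular at every feasible maximizer, and the optimum should be attainable. Both are automatic in the setting used here, since $f$ is a real homogeneous polynomial of degree $m$ and $g(u)=0$ is the normalization-type constraint appearing in the U-eigenvalue reformulation from Section 2, making the feasible set a compact real algebraic variety on which the continuous $f$ attains its maximum.

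Next, I would verify the structural role of the polynomials $h_r$ defined in (\ref{Eq:Hr}). These are precisely the $2\times2$ minors $f'_{u_i}g'_{u_j}-f'_{u_j}g'_{u_i}$ of the Jacobian matrix $[\nabla f(u)\mid\nabla g(u)]^T$, collected according to the index sum $i+j=r+2$. The preceding Lemma already tells us that appending the system $h_r(u)=0$ does not alter the optimal value, because at any KKT point of (\ref{convert(14)}) the gradient $\nabla f$ is parallel to $\nabla g$ and so all such minors vanish. This is the Jacobian-ideal augmentation central to \cite{Nie13}: it enforces the first-order optimality conditions as additional polynomial equations without shrinking the global optimum.

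Third, I would invoke Nie's main exactness result on the augmented problem (\ref{eq:polyopt2}). That theorem shows that the truncated quadratic module generated by $\{g,h_1,\dots,h_{4n-3}\}$ is Archimedean modulo the Jacobian ideal and admits a finite-order sum-of-squares certificate for $f_{max}-f(u)$. Dualizing via Lasserre's moment-SDP correspondence, this certificate forces $\rho_N\leq f_{max}$ to become an equality once $N$ is large enough to represent the SOS multipliers. Combining with the monotone-decreasing upper-bound property of $\{\rho_N\}$ stated just before the theorem, this gives $\rho_N=f_{max}$ for all $N$ sufficiently large, which is the conclusion sought.

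The main technical obstacle, were one to produce a self-contained argument, would be reconstructing the finite-degree Positivstellensatz certificate for $f_{max}-f(u)$ modulo the Jacobian-augmented ideal; this requires the algebraic-geometric machinery of critical varieties and Curto--Fialkow-type flat extensions developed in \cite{Nie13}. Here, however, the theorem is cited from \cite{HNZ16}, whose proof simply specializes Nie's general exactness theorem to the equality-constrained homogeneous setting of (\ref{convert(14)}); accordingly the work reduces to checking the two bullet points above, namely (i) nonsingularity plus attainability and (ii) the Jacobian-minor description of the $h_r$.
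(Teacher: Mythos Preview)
The paper does not supply its own proof of this statement; the theorem is quoted as a cited result from \cite{HNZ16}, with no argument given in the present paper. Your proposal correctly recognizes this and correctly reconstructs the strategy one expects in \cite{HNZ16}: verify that the $h_r$ in (\ref{Eq:Hr}) are exactly the $2\times2$ minors of the Jacobian $[\nabla f\mid\nabla g]^T$, observe that the Lemma guarantees the augmented problem (\ref{eq:polyopt2}) has the same optimum as (\ref{convert(14)}), and then invoke Nie's exactness theorem \cite{Nie13} for the Jacobian-augmented relaxation.

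One minor caution: in the abstract setting of (\ref{convert(14)}) the paper only assumes that $g(u)$ is ``a real polynomial function,'' so your claim that ``$g(u)=0$ is the normalization-type constraint\dots making the feasible set a compact real algebraic variety'' is not justified by the statement as written here. Nie's theorem in \cite{Nie13} also needs a nonsingularity/genericity hypothesis that is not recorded in the present statement. In the concrete applications of Section~3.2 the constraints are indeed unit-sphere constraints and these hypotheses are satisfied, and in \cite{HNZ16} the result is stated in that specific context; but strictly speaking those assumptions should be listed rather than asserted to be ``automatic.'' Apart from that overreach in generality, your outline matches the intended reduction to \cite{Nie13}.
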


\subsection{The Jacobian SDP Relaxation Method for the Largest U-Eigenvalue of a Non-Symmetric Tensor}
Let $\mathcal{A}$ be a non-symmetric complex tensor. The maximum optimization problem (\ref{max appro1}) is equivalent to the following optimization
problem:

\begin{equation}\label{cunitsphere}
\begin{array}{rl}
\hat{f}:=\max&\mbox{Re} (\langle\mathcal{A}, \otimes_{i=1}^{m} z^{(i)}\rangle)\\
{s.t.} & \|z^{(i)}\|=1, ~~z^{(i)}\in \mathbb{C}^{n_i},~~i=1,\ldots,m
\end{array}
\end{equation}
Clearly, the largest U-eigenvalue $\lambda=\hat{f}$ and the optimal solution is the corresponding  U-eigenvector of $\mathcal{A}$.
Note that any complex number $c$
can be expressed as two real numbers $a,b$ with $c=a+\sqrt{-1}b$. So (\ref{cunitsphere}) can be rewritten as
 \begin{equation}\label{runitsphere}
\begin{array}{rl}
\hat{f}:=\max&\mbox{Re} (\langle\mathcal{A}, \otimes_{i=1}^{m}(x^{(i)}+\sqrt{-1} y^{(i)}\rangle)\\
{s.t.} & ||x^{(i)}||^2+||y^{(i)}||^2=1,
x^{(i)}, y^{(i)}\in \mathbb{R}^{n_i}, ~~i=1,\ldots ,m
\end{array}
\end{equation}
Note that the objective function of (\ref{runitsphere}) is a real multilinear function of degree $m$ and dimension $2(n_1+\dots+n_m)$.
For convenience, let tensor $\mathcal{B}\in \mathbb{R}^{2n_1\times \dots\times 2n_m}$ satisfy
\begin{equation}\label{complex2realT}
  \langle\mathcal{B}, \otimes_{i=1}^{m} u^{(i)}\rangle=\mbox{Re} (\langle\mathcal{A}, \otimes_{i=1}^{m}(x^{(i)}+\sqrt{-1} y^{(i)}\rangle)
\end{equation}
with $u^{(i)}=(x^{(i)}, y^{(i)}) \in \mathbb{R}^{2n_i}$ for $i=1,\dots, m.$
From the previous discussion,  problem (\ref{runitsphere}) is equivalent to the following optimization
 \begin{equation}\label{psym}
\begin{array}{rl}
\hat{f}:=\max& \langle\mathcal{B}, u^{(1)} \otimes u^{(2)}\otimes\dots \otimes u^{(m)}\rangle\\
{s.t.} & \|u^{(i)}\|=1,
u^{(i)}\in \mathbb{R}^{2n_i}, ~~i=1,2\dots, m.
\end{array}
\end{equation}

Since problem (\ref{psym}) is a spherical multilinear optimization, to lower the dimension, (\ref{psym}) can be rewritten as
 \begin{equation}\label{ldop}
\begin{array}{rl}
\hat{f}:=\max& \|\langle\mathcal{B}, u^{(1)}\otimes u^{(2)}\otimes  \dots \otimes u^{(m-1)}\rangle\|\\
{s.t.} & \|u^{(i)}\|=1,
u^{(i)}\in \mathbb{R}^{2n_i}, ~~i=1,2\dots, m-1.
\end{array}
\end{equation}

Let $f(u)=\|\langle\mathcal{B}, u^{(1)}\otimes u^{(2)}\otimes  \dots \otimes u^{(m-1)}\rangle\|^2$, $g_k(u)=||u^{(k)}||^2-1$. Similar to (\ref{eq:polyopt2}), the optimization problem (\ref{ldop}) can be written as
 \begin{equation}\label{Eq:sdpnons}
\begin{array}{rl}
\hat{f}^{2}:=\max& f(u)\\
{s.t.} & g_k(u)=0, h_{k,r}=0 ~~k=1,2\dots, m-1,\ 1\leq r\leq 4n_k-3.
\end{array}
\end{equation}
where
\begin{equation}\label{Eq:Hir}
 h_{k,r}:=\sum_{i+j=r+2}((g_k)'_{u_j}f'_{u_i}-(g_k)'_{u_i}f'_{u_j}),~1\leq r\leq 4n_k-3,
\end{equation}

Lasserre's SDP relaxations for solving (\ref{Eq:sdpnons}) is
\begin{equation}\label{LasSDP1}
  \left\{
    \begin{array}{ll}
      \rho_N:=& \max \limits_{\alpha\in \mathbb{N}^{2n}:|\alpha|=2m-2}f_{\alpha}y_{\alpha} \\
      s.t & L_{g_k}^{(N)}(y)=0,~L_{h_{k,r}}^{(N)}(y)=0\ (k=1, \cdots, m-1, r=1,...,4n_k-3)\\
      ~~~ & y_0=1, M_N(y)\succeq 0.
    \end{array}
  \right.
\end{equation}

%We may solve the polynomial optimization by Jacobian SDP relaxation method.

\begin{theorem}\label{Th:eigenpairs}
Let $x^{(i)}, y^{(i)}\in \mathbb{R}^{n_i}$, $u^{(i)}=(x^{(i)}, y^{(i)})\in \mathbb{R}^{2n_i}$ for $i=1,\dots, m.$ Let $\mathcal{A}$ be an $m$th-order complex tensor in $\mathbb{C}^{n_1\times\cdots\times n_m}$ and $\mathcal{B}$ be an $m$th-order real tensor in $\mathbb{R}^{2n_1\times\cdot\cdot\cdot\times 2n_m}$ satisfying {\rm (\ref{complex2realT})}.
Assume that $\{\hat{u}^{(i)}| i=1,\cdots, m-1\}$ is a maximizer of the optimization problem {\rm (\ref{ldop})} and $\lambda$ is the maximal value. Let
\begin{equation}\label{Eq:eigenUm}
 \hat{u}^{(m)}=\frac{\langle \mathcal{B}, \hat{u}^{(1)}\otimes\cdots\otimes\hat{u}^{(m-1)} \rangle}{\lambda},\ \hat{z}^{(i)}=\hat{x}^{(i)} + \hat{y}^{(i)}\sqrt{-1},\ i=1, \cdots, m.
\end{equation}
%$$
%\hat{z}^{(m)}=\frac{\langle \mathcal{A}, %\hat{z}^{(1)}\otimes\cdots\otimes\hat{z}^{(m-1)} \rangle^*}{\lambda}.
%$$
Then, $\lambda$ is the largest U-eigenvalue of $\mathcal{A}$ and $\{\hat{z}^{(1)},\cdots , \hat{z}^{(m)}\}$  is a tuple of corresponding U-eigenvector.
\end{theorem}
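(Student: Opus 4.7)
The plan is to chain the equivalences between the four optimization problems already introduced in this section, namely (\ref{cunitsphere}), (\ref{runitsphere}), (\ref{psym}), (\ref{ldop}), and then invoke the characterization of the largest U-eigenvalue as the maximum of (\ref{cunitsphere}) together with Theorem \ref{TH:pmlambda}.

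First I would handle the reduction from (\ref{psym}) to (\ref{ldop}). Fix the first $m-1$ factors at $\hat{u}^{(1)},\ldots,\hat{u}^{(m-1)}$ and write $v:=\langle\mathcal{B},\hat{u}^{(1)}\otimes\cdots\otimes\hat{u}^{(m-1)}\rangle\in\mathbb{R}^{2n_m}$. The inner $\max$ over $u^{(m)}\in\mathbb{R}^{2n_m}$ with $\|u^{(m)}\|=1$ of $\langle v,u^{(m)}\rangle$ equals $\|v\|$, attained at $u^{(m)}=v/\|v\|$ (assuming $\lambda=\|v\|\neq 0$, which holds for any nontrivial $\mathcal{A}$ since $\lambda$ is the maximum). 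Thus the value of (\ref{psym}) equals $\lambda$, and $\hat{u}^{(m)}$ defined by (\ref{Eq:eigenUm}) is a unit vector and completes $(\hat{u}^{(1)},\ldots,\hat{u}^{(m-1)})$ to a maximizer of (\ref{psym}).

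Next, using the definition (\ref{complex2realT}) of $\mathcal{B}$, the problem (\ref{psym}) is just (\ref{runitsphere}) under the identification $u^{(i)}=(x^{(i)},y^{(i)})$: the objectives agree and so do the unit-norm constraints since $\|u^{(i)}\|^2=\|x^{(i)}\|^2+\|y^{(i)}\|^2$. In turn, (\ref{runitsphere}) is exactly (\ref{cunitsphere}) once each complex $z^{(i)}$ is split into its real and imaginary parts. Hence the tuple $\hat{z}^{(i)}=\hat{x}^{(i)}+\sqrt{-1}\,\hat{y}^{(i)}$ obtained from the enlarged maximizer is an optimizer of (\ref{cunitsphere}), and the optimal value is $\lambda$.

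Finally, I would close the argument by recalling that, as discussed right after (\ref{EQ:Ueigen1}), the optimal value of (\ref{max appro1}) coincides with the largest absolute value of the U-eigenvalues of $\mathcal{A}$, and the corresponding maximizing rank-one tensor gives a U-eigenvector tuple; noting that $\mathrm{Re}\langle\mathcal{A},\otimes z^{(i)}\rangle=\tfrac{1}{2}(\langle\mathcal{A},\otimes z^{(i)}\rangle+\langle\otimes z^{(i)},\mathcal{A}\rangle)$, the problem (\ref{cunitsphere}) has the same optimal value as (\ref{max appro1}) up to the standard factor, and the maximizer can be taken identical. Theorem \ref{TH:pmlambda} then ensures that this largest absolute value is itself a U-eigenvalue, so it is the largest U-eigenvalue of $\mathcal{A}$, and $(\hat{z}^{(1)},\ldots,\hat{z}^{(m)})$ is a corresponding U-eigenvector tuple. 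The most delicate step is the first one: one must verify that $\lambda>0$ so that $\hat{u}^{(m)}$ is well-defined and of unit norm, and that the partial maximization really produces an optimizer of (\ref{psym}); the remaining steps are bookkeeping in the real/complex and symmetry reformulations.
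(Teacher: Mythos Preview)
Your proposal is correct and follows essentially the same route as the paper: both chain the equivalences (\ref{ldop}) $\to$ (\ref{psym}) $\to$ (\ref{runitsphere}) $\to$ (\ref{cunitsphere}) and then read off the U-eigenpair from the maximizer. Your treatment of the Cauchy--Schwarz step recovering $\hat{u}^{(m)}$ and the caveat $\lambda>0$ is in fact more explicit than the paper's. The only cosmetic difference is the last step: you cite the discussion after (\ref{EQ:Ueigen1}) together with Theorem~\ref{TH:pmlambda}, whereas the paper writes down $\lambda=\tfrac{1}{2}\big(\langle\mathcal{A},\otimes_i \hat{z}^{(i)}\rangle+\langle\otimes_i \hat{z}^{(i)},\mathcal{A}\rangle\big)$ and derives the equations $\langle\mathcal{A},\otimes_{i\neq k}\hat{z}^{(i)}\rangle=\lambda\,\hat{z}^{(k)*}$ directly from first-order optimality; these are the same argument.
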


\begin{proof} By the assumption, $\{\hat{u}^{(i)}| i=1,\cdots, m-1\}$ is a maximizer of the optimization problem (\ref{ldop}) and $\lambda$ is the maximal value. Since again $\hat{u}^{(m)}$ is defined as in (\ref{Eq:eigenUm}). Hence, $\{\hat{u}^{(i)}| i=1,\cdots, m \}$ is a maximizer and $\lambda$ is the maximal value of the optimization problem (\ref{psym}). It follows that $\{\hat{z}^{(i)}| i=1,\cdots, m \}$ is a maximizer and $\lambda$ is the maximal value of the optimization problem (\ref{cunitsphere}), which means that
\begin{equation}\label{eq:maxlambda}
 \lambda=\frac{\langle\mathcal{A}, \otimes_{i=1}^{m} \hat{z}^{(i)}\rangle + \langle\otimes_{i=1}^{m} \hat{z}^{(i)}, \mathcal{A}\rangle}{2}.
\end{equation}
Since $\lambda$ is the maximal value, (\ref{eq:maxlambda}) implies that
$$
\langle\mathcal{A}, \otimes_{i=1,i\not=k}^{m} \hat{z}^{(i)}\rangle = \lambda \hat{z}^{(k)*},\ \mathrm{for\ } k=1, \cdots, m.
$$
It follows that $\lambda$ is the largest U-eigenvalue of $\mathcal{A}$ and $\{\hat{z}^{(1)},\cdots , \hat{z}^{(m)}\}$  is a tuple of corresponding U-eigenvector. This completes the proof.   \qed
\end{proof}

Next, we consider the case that tensor $\mathcal{A}$ is partially symmetric. Without loss of generality, we assume that the tensor is partially symmetric in the first two indices. That is, $\mathcal{A}_{i_1i_2i_3\dots i_m}=\mathcal{A}_{i_2i_1i_3\dots i_m}$ for any fixed $i_3,\dots, i_m$. It is clear that $\mathcal{B}$ is also partially symmetric in the first two indices.

Similarly to the proof of Theorem 2.1 in \cite{ZLQ12}, problem (\ref{runitsphere}) is equivalent to the following optimization
 \begin{equation}\label{ppsym}
\begin{array}{rl}
\hat{f}:=\max& \langle\mathcal{B}, (u^{(1)})^2 \otimes u^{(3)}\dots \otimes u^{(m)}\rangle\\
{s.t.} & \|u^{(i)}\|=1,
u^{(i)}\in \mathbb{R}^{2n_i}, ~~i=1,3\dots, m.
\end{array}
\end{equation}

Assume that a tuple of unit vectors $\{x^{(1)}, \dots, x^{(m)}\}$ is a solution of (\ref{ppsym}), then $$\langle\mathcal{B}, x^{(1)} \otimes x^{(2)}\otimes\dots \otimes x^{(m)}\rangle=|| \langle\mathcal{B}, x^{(1)} \otimes x^{(2)}\otimes\dots \otimes x^{(m-1)}\rangle ||$$ and $$ x^{(m)} = \langle\mathcal{B}, x^{(1)} \otimes x^{(2)}\otimes\dots \otimes x^{(m-1)}\rangle / \langle\mathcal{B}, x^{(1)} \otimes x^{(2)}\otimes\dots \otimes x^{(m)}\rangle. $$
It follows that
$$\max \langle\mathcal{B}, u^{(1)} \otimes u^{(2)}\dots \otimes u^{(m)}\rangle = \max \|\langle\mathcal{B}, u^{(1)}\otimes u^{(2)}\otimes  \dots \otimes u^{(m-1)}\rangle\|.$$

Without loss of generality,  we assume that $n_1\leq \dots\leq n_m$.  Since problem (\ref{ppsym}) is a spherical multilinear optimization, to lower the dimension, (\ref{ppsym}) can be rewritten as
 \begin{equation}\label{Eq:pldop}
\begin{array}{rl}
\hat{f}:=\max& \|\langle\mathcal{B}, (u^{(1)})^2\otimes u^{(3)}\otimes  \dots \otimes u^{(m-1)}\rangle\|\\
{s.t.} & \|u^{(i)}\|=1,
u^{(i)}\in \mathbb{R}^{2n_i}, ~~i=1,3, \cdots, m-1.
\end{array}
\end{equation}

Let $f(u)=\|\langle\mathcal{B}, (u^{(1)})^2\otimes u^{(3)}\otimes \dots \otimes u^{(m-1)}\rangle\|^2$ and $g_k(u)=||u^{(k)}||^2-1$. Then, similar to (\ref{eq:polyopt2}), optimization problem (\ref{Eq:pldop}) can be written as
 \begin{equation}\label{Eq:psdpnons}
\begin{array}{rl}
\hat{f}^{2}:=\max& f(u)\\
{s.t.} & g_k(u)=0, h_{k,r}=0 ~~k=1,3,4, \cdots, m-1,\ 1\leq r\leq 4n_k-3.
\end{array}
\end{equation}
where
\begin{equation}\label{Eq:Hir}
 h_{k,r}:=\sum_{i+j=r+2}((g_k)'_{u_j}f'_{u_i}-(g_k)'_{u_i}f'_{u_j}),~1\leq r\leq 4n_k-3,
\end{equation}

Lasserre's SDP relaxations for solving (\ref{Eq:psdpnons}) is
\begin{equation}\label{LasSDP2}
  \left\{
    \begin{array}{ll}
      \rho_N:=& \max \limits_{\alpha\in \mathbb{N}^{2n}:|\alpha|=2m-2}f_{\alpha}y_{\alpha} \\
      s.t & L_{g_k}^{(N)}(y)=0,~L_{h_{k,r}}^{(N)}(y)=0(k=1, 3, 4, \cdots, m-1, r=1,...,4n_k-3)\\
      ~~~ & y_0=1, M_N(y)\succeq 0.
    \end{array}
  \right.
\end{equation}

 Then, by the discussion of totally non-symmetric tensor, we use Jacobian SDP relaxation method to solve (\ref{Eq:psdpnons}) and get the largest U-eigenvalue $\lambda=\hat{f}$ and the $m$ corresponding U-eigenvectors. Because in this case the tensor is partially symmetric, the dimension of (\ref{Eq:pldop}) is much lower than that of (\ref{ldop}), which can help us to increase the computational efficiency.

\section{Numerical Examples}
Theorem \ref{TH:Glambdamax} illustrates that the entanglement eigenvalue of a quantum state $|\phi\rangle$ can be obtained by computing the largest U-eigenvalue of the corresponding complex tensor. For non-symmetric case, a polynomial optimization method can be used to compute the largest U-eigenvalue of a non-symmetric tensor. In this section, we present numerical examples of using the polynomial optimization method to find the largest U-eigenvalue of non-symmetric tensors.

The computations are implemented in MATLAB 2014a on a Microsoft Win10 Laptop with 8GB memory and Intel(R) CPU 2.40GHZ. We use the toolbox \emph{Gloptipoly 3} and \emph{SDPNAL+} to solve the SDP relaxation problems.
%%%%%%%%%%%%%%%%%%%%%%%%%%%%%%%%%%%%%%%%
\begin{example}\label{EX: partically symmetric tensor}
Consider a non-symmetric 3-partite state
$$|\psi\rangle= \frac{1}{2}|000\rangle+ \frac{\sqrt{3}}{6}(|110\rangle+ |011\rangle+ |101\rangle)+(\frac{1}{2}+\frac{1}{2}\sqrt{-1})|001\rangle,$$
which corresponds a $3$-rd order $2\times2\times2$ complex tensor $\mathcal{A}$, and its non-zero entries are $\mathcal{A}_{111}=\frac{1}{2},\ \mathcal{A}_{221} =\mathcal{A}_{212}= \mathcal{A}_{122}=\frac{\sqrt{3}}{6},\  \mathcal{A}_{112}=\frac{1}{2}+\frac{1}{2}\sqrt{-1}$.
It is clear that $\mathcal{A}$ is a partially symmetric tensor in the first two indices. By (\ref{Eq:pldop}), the largest U-eigenvalue problem  is equivalent to solve the following optimization
 \begin{equation}\label{ex:Bu2}
\begin{array}{rl}
\hat{f}:=\max \|\langle\mathcal{B}, (u^{(1)})^2\rangle\| \quad {s.t.}  \|u^{(1)}\|=1, \quad u^{(1)}\in \mathbb{R}^{4},
\end{array}
\end{equation}
where $u^{(1)}=(u_1,u_2,u_3,u_4)^\top$, and
$$\|\langle\mathcal{B}, (u^{(1)})^2\rangle\|^2=\frac{u_1^2u_2^2}{3} + \frac{3u_1^2u_3^2}{2} + \frac{u_1^2u_4^2}{3} + \frac{u_2^2u_3^2}{3} +\cdots + \frac{2\sqrt{3}u_1u_2u_3u_4}{3}.
$$

We first convert the optimization problem (\ref{ex:Bu2}) to (\ref{Eq:psdpnons}) and set $u_3=0$ to avoid the situation of infinite number of solutions, and then we solve the polynomial optimization (\ref{Eq:psdpnons}). We obtain two maximizers
$$ \hat{u}^{(1)} =( -0.9625, -0.2242, 0, 0.1530)^\top \ \mathrm{or\ }
 \hat{u}^{(1)} =( 0.9625, 0.2242, 0, -0.1530)^\top. $$
and the maximal value $\hat{f}=0.9317$.

Hence, we get the largest U-eigenvalue $ \lambda=0.9317 $ and two tuples of corresponding U-eigenvectors
\begin{align*}
 \hat{z}^{(1)}&=\hat{z}^{(2)}=(-0.9625,  -0.2242 + 0.1530\sqrt{-1})^\top,\\
  \hat{z}^{(3)}&=(0.5054 + 0.0213\sqrt{-1}, 0.6308 + 0.5883\sqrt{-1})^\top
\end{align*}
 or
\begin{align*}
 \hat{z}^{(1)}&=\hat{z}^{(2)}=(0.9625 , 0.2242 - 0.1530\sqrt{-1})^\top,\\
  \hat{z}^{(3)}&=(0.5054 + 0.0213\sqrt{-1}, 0.6308 + 0.5883\sqrt{-1})^\top
\end{align*}
By Theorem \ref{TH:Glambdamax} we obtain the entanglement eigenvalue $G(|\psi\rangle)=\lambda=0.9317$.  \end{example}

%%%%%%%%%%%%%%%%%%%%%%%%%%%%%%%%%%%%%%%%

\begin{example}\label{EX:totally non-symmetric tensor of 2*2*2}
  Consider a non-symmetric 3-partite state
$$|\psi\rangle= \frac{1}{6}|000\rangle+\frac{2}{3}\sqrt{-1}|111\rangle+ (\sqrt{\frac{1}{3}}+\frac{1}{3}\sqrt{-1})|101\rangle+ \frac{\sqrt{3}}{6}|100\rangle,$$
which corresponds a $3$rd-order $2$-dimension non-symmetric square tensor $\mathcal{A}$ with $\mathcal{A}_{111}=\frac{1}{6},$ $\mathcal{A}_{222}=\frac{2}{3}\sqrt{-1},$ $\mathcal{A}_{212}=(\sqrt{\frac{1}{3}}+\frac{1}{3}\sqrt{-1}),$ $\mathcal{A}_{211}=\frac{\sqrt{3}}{6}$.
It is easy to see that $\mathcal{A}$ is a totally non-symmetric tensor. According to the general case, the largest U-eigenvalue problem  is equivalent to solve the following optimization
 \begin{equation}\label{ex:Bu1u2}
\begin{array}{rl}
\hat{f}:=\max& \|\langle\mathcal{B}, u^{(1)}\otimes u^{(2)}\rangle\|\\
{s.t.} & \|u^{(i)}\|=1,
u^{(i)}\in \mathbb{R}^{4}, i=1,2.
\end{array}
\end{equation}
Here $u^{(1)}=(u^{(1)}_1,u^{(1)}_2,u^{(1)}_3 , u^{(1)}_4)^\top, u^{(2)}=(u^{(2)}_1,u^{(2)}_2,u^{(2)}_3,u^{(2)}_4)^\top$, and
$$||\langle\mathcal{B}, u^{(1)}\otimes u^{(2)}\rangle||^2=\frac{({u^{(1)}_1})^2({u^{(2)}_1})^2}{36} + \frac{({u^{(1)}_1})^2({u^{(2)}_3})^2}{36}+ \cdots  + \frac{4({u^{(1)}_4})^2({u^{(2)}_4})^2}{9}.
$$

We convert the optimization problem (\ref{ex:Bu1u2}) to (\ref{Eq:sdpnons}) and set $u^{(1)}_3=u^{(2)}_3=0$ to avoid the situation of infinite number of solutions. Then, we solve the polynomial optimization (\ref{Eq:sdpnons}). We get the largest U-eigenvalue $ \lambda=0.9661 $ and a tuple of corresponding U-eigenvectors as
\begin{eqnarray*}
% \nonumber to remove numbering (before each equation)
  && \hat{z}^{(1)}=(-0.0287, -0.9996)^\top,\   \hat{z}^{(2)}=(-0.7404, -0.3361-0.5821\sqrt{-1})^\top, \\
  && \hat{z}^{(3)}= (0.2248, 0.8439+0.4872\sqrt{-1})^\top.
\end{eqnarray*}
By Theorem \ref{TH:Glambdamax}, we obtain the entanglement eigenvalue $G(|\psi\rangle)=\lambda=0.9661$.
 \end{example}

 %%%%%%%%%%%%%%%%%%%%%%%%%%%%%%%%%%%%%%%%

%%%%%%%%%%%%%%%%%%%%%%%%%%%%%%%%%%%%%%%%%%%%%%%%%%%%%%%%%%%%

 \begin{example}\label{EX: Tensor with parameter 2*2*2}
  Consider a non-symmetric 3-partite state with parameter as the following
   $$|\psi\rangle= \sum_{i_1, i_2, i_3=1}^n \frac{\cos(i_1-i_2+i_3)+\sqrt{-1}\  \sin(i_1+i_2-i_3)}{\sqrt{n^3}}|(i_1-1)(i_2-1)(i_3-1)\rangle,$$
 $ |\psi\rangle $ corresponds to a $3$rd-order non-symmetric tensor $\mathcal{A}\in \mathbb{C}^{n\times n\times n}$ with $$\mathcal{A}_{i_1i_2i_3}= \frac{\cos(i_1-i_2+i_3)+\sqrt{-1}\  \sin(i_1+i_2-i_3)}{\sqrt{n^3}}.$$

For $n=2$, the largest U-eigenvalue is equivalent to the following optimization problem
 \begin{equation}
\max \|\langle\mathcal{B}, u^{(1)}\otimes u^{(2)}\rangle\|, \
{s.t.\ }  \|u^{(i)}\|=1,
u^{(i)}\in \mathbb{R}^{4}, i=1,2.
\end{equation}
Here $u^{(1)}=(u^{(1)}_1,u^{(1)}_2,u^{(1)}_3 , u^{(1)}_4)^\top, u^{(2)}=(u^{(2)}_1,u^{(2)}_2,u^{(2)}_3,u^{(2)}_4)^\top$.

We convert the optimization problem to (\ref{Eq:sdpnons}) and set $u^{(1)}_3=u^{(2)}_3=0$ to avoid the situation of infinite number of solutions. Then, we solve the polynomial optimization (\ref{Eq:sdpnons}). We get the largest U-eigenvalue $ \lambda=0.8895 $ and a tuple of corresponding U-eigenvectors as
\begin{eqnarray*}
% \nonumber to remove numbering (before each equation)
  && \hat{z}^{(1)}=-(0.6928, 0.6734 + 0.2580 \sqrt{-1})^\top, \ \hat{z}^{(2)}=-(0.689, 0.450 - 0.5681\sqrt{-1})^\top, \\
  && \hat{z}^{(3)}=(0.1533 + 0.7083\sqrt{-1}, -0.4375 + 0.5324\sqrt{-1})^\top.
\end{eqnarray*}
By Theorem \ref{TH:Glambdamax}, we obtain the entanglement eigenvalue $G(|\psi\rangle)=\lambda=0.8895$. \end{example}

 %%%%%%%%%%%%%%%%%%%%%%%%%%%%%%%%%%%%%%%%

 \begin{example}\label{EX: Tensor with parameter 2*2*2}
  Consider a symmetric 3-partite state with parameter given as $$|\psi\rangle= \sum_{i_1, i_2, i_3=1}^n \frac{\cos(i_1+i_2+i_3)+\sqrt{-1}\  \sin(i_1+i_2+i_3)}{\sqrt{n^3}}|(i_1-1)(i_2-1)(i_3-1)\rangle,$$
 $ |\psi\rangle $ corresponds to a $3$rd-order symmetric tensor $\mathcal{A}\in \mathbb{C}^{n\times n\times n}$ with $$\mathcal{A}_{i_1i_2i_3}= \frac{\cos(i_1+i_2+i_3)+\sqrt{-1}\  \sin(i_1+i_2+i_3)}{\sqrt{n^3}}.$$

For $n=2$, the largest U-eigenvalue problem is equivalent to solve the following optimization
\begin{equation}
\begin{array}{rl}
\hat{f}:=\max& \mathrm{Re} \langle\mathcal{A}, z^3\rangle,\ %\\
{\mathrm{s.t.}}\ \|z\|=1,
z\in \mathbb{C}^{2}.
\end{array}
\end{equation}
We solve the polynomial optimization by SDP relaxation method and obtain the largest U-eigenvalue $ \lambda=1 $ and a tuple of corresponding U-eigenvector $$\hat{z}=(0.382051 + 0.59501 \sqrt{-1}, -0.29426 + 0.64297 \sqrt{-1})^\top.$$ It is easy to verify that $\mathcal{A}=\hat{z}\otimes\hat{z}\otimes\hat{z}$. Hence, this is a rank-one tensor, which means that the pure state $|\psi\rangle$ is a separable state. Let
$$|\phi\rangle=(0.382051 + 0.59501 \sqrt{-1})|0\rangle + (-0.29426 + 0.64297 \sqrt{-1})|1\rangle.$$
Then, $|\psi\rangle=|\phi\rangle\otimes|\phi\rangle\otimes|\phi\rangle$.

\end{example}

 \end{document}